\newtheorem{theorem}{Theorem}[section]
\newtheorem{proposition}[theorem]{Proposition}
\theoremstyle{definition}
\newtheorem{definition}[theorem]{Definition}
\newtheorem{example}[theorem]{Example}
\theoremstyle{remark}
\newtheorem{property}[theorem]{Property}
\numberwithin{equation}{section}
\begin{document}
\title[Wilder continua]{Wilder continua and their subfamilies as coanalytic absorbers}

\author[K. Kr\'olicki]{Konrad Kr\'olicki}
\email{konee.0@gmail.com}
\author[P. Krupski]{Pawe\l\ Krupski}
\email{pawel.krupski@uwr.edu.pl}
\address{Mathematical Institute, University of Wroc\l aw, pl.
Grunwaldzki 2/4, 50--384 Wroc\l aw, Poland}
\date{\today}
\subjclass[2010]{Primary 57N20; Secondary 54H05, 54F15}
\keywords{absorber, arcwise connected, coanalytic, Hilbert cube, hyperspace, Wilder continuum}

\begin{abstract} The  family of  Wilder continua in the cube $I^n$ and its two subfamilies---of continuum-wise Wilder continua and of hereditarily arcwise connected continua---are recognized as coanalytic absorbers in the hyperspace $C(I^n)$ of subcontinua of $I^n$ for $3\leq n\leq\infty$. In particular, each of them is homeomorphic to the set  of all nonempty countable closed subsets of the unit interval $I$.
\end{abstract}

\maketitle

\section{Introduction}

By a continuum we mean a nonempty Hausdorff compact connected space.

\begin{definition}\cite{Wilder1}\label{D1}
A topological space X has the \emph{Wilder property} if it has
at least three points and for any mutually distinct points $x, y, z \in X$  there
exists a continuum $K \subset X$ containing $x$ and exactly one of the points $y, z$.
A continuum that has the Wilder property is called a \emph{Wilder continuum}.
A continuum each of whose nondegenerate
subcontinua has the Wilder property is called a \emph{hereditarily Wilder
continuum.}
\end{definition}

 The  Wilder property
was introduced by B.~E.~Wilder in~\cite{Wilder1} under the name of property $C$. However, in order
to avoid confusion with a much more popular concept of a $C$-space existing in dimension theory (cf.~\cite{E}), we decided
to change the name.  It was shown by Wilder that both arcwise connected  spaces (by an arc we mean a continuum with exactly two non-separating points)
and  aposyndetic  continua have the Wilder property and  plane examples were given of Wilder continua that are neither arcwise connected nor aposyndetic~\cite{Wilder1}, ~\cite{Wilder3}.   Moreover, by~\cite[Corollary 2]{Wilder1}, one can observe that  a  nondegenerate continuum
is a hereditarily Wilder continuum if and only if it is hereditarily arcwise connected.
Wilder continua were also studied in~\cite{Wilder2},~\cite{Lon} and~\cite{Ban}.

Besides (hereditarily) Wilder continua, we introduce a narrower class of \emph{continuum-wise Wilder continua} (in this vein, Wilder continua could well be called point-wise Wilder).

\begin{definition}\label{D2}
 A space $X$ has the \emph{continuum-wise Wilder property}, if it
contains at least three points and if for any mutually disjoint subcontinua
$A,B,C$ of $X$ there exists a subcontinuum $L \supset A$ containing exactly one
of $B,C$ and disjoint with the other one (i.e. either $B \subset L$, $C \cap L =\emptyset$  or
vice-versa).
\end{definition}

\begin{example}
A plane Wilder continuum which is not continuum-wise Wilder can be obtained from the set
\begin{multline*}
(\{-1,1\}\times [-1,1])\cup ( [-1,1]\times \{-1,1\})\cup
\\
\{(x,\sin\frac1{x-1}): x\in (1,2]\}\cup \{(x,\sin\frac1{x+1}): x\in (-2,-1]\}
\end{multline*}
by identifying points $(2,\sin 1)$ and $(-2,\sin(-1))$.

\end{example}

Using Wilder's result that each irreducible continuum with his property is an arc~\cite{Wilder1}, one can easily  observe that in the realm of hereditarily unicoherent continua all three types of Wilder continua coincide with dendroids ($\equiv$  nondegenerate hereditarily unicoherent arcwise connected continua).

 \begin{proposition}
If $X$ is a nondegenerate hereditarily unicoherent continuum, then the following statements are equivalent.

\begin{itemize}
\item
$X$ is Wilder,
\item
$X$ is a dendroid,
\item
$X$ is hereditarily Wilder,
\item
 $X$   is  continuum-wise Wilder.
 \end{itemize}
\end{proposition}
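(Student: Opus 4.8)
The plan is to prove the four conditions equivalent through a short cycle of implications, exploiting that two of them are essentially formal while two carry the real content. The formal ones are that $X$ hereditarily Wilder implies $X$ Wilder (since $X$ is a nondegenerate subcontinuum of itself), and that $X$ continuum-wise Wilder implies $X$ Wilder, obtained by applying Definition~\ref{D2} to the three degenerate subcontinua $\{x\},\{y\},\{z\}$ coming from three distinct points. Throughout I would lean on the two external facts already recorded above: Wilder's theorem that an irreducible continuum with the Wilder property is an arc, and the equivalence, for nondegenerate continua, of hereditary Wilderness with hereditary arcwise connectedness from~\cite{Wilder1}.

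Before the main steps I would assemble the standard geometry of hereditary unicoherence. For $a\ne b$ the family of subcontinua containing $\{a,b\}$ is closed under pairwise intersection, hence downward directed, so its intersection $[a,b]$ is a subcontinuum, minimal among those containing $a$ and $b$, and therefore irreducible between them; being nondegenerate it has more than two points. I would also record that, once arcwise connectedness is present, arcs are unique: two arcs with the same endpoints meet in a connected set, which is a subarc of each containing both its endpoints and hence equals each of them. Finally, for disjoint subcontinua, an arc from $A$ to $B$ meets $A$ and $B$ in subarcs, leaving a ``bridge'' $[p,q]$ with $p\in A$, $q\in B$ and open part disjoint from $A\cup B$.

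The first substantial step is $\text{Wilder}\Rightarrow\text{dendroid}$ together with its hereditary refinement, and the key is an intersection trick: if $X$ is Wilder then every subcontinuum $L\subset X$ is Wilder. Indeed, for distinct $x,y,z\in L$ choose, by the Wilder property of $X$, a continuum $K$ containing $x$ and exactly one of $y,z$, say $y$; then $K\cap L$ is nonempty and, by hereditary unicoherence, connected, so it is a subcontinuum of $L$ containing $x$ and $y$ but not $z$. Applying this with $L=[a,b]$ and invoking Wilder's theorem shows $[a,b]$ is an arc, so $X$ is arcwise connected, i.e.\ a dendroid. Running the same argument inside an arbitrary subcontinuum $Y$ (noting $[a,b]\subseteq Y$ whenever $a,b\in Y$) shows every subcontinuum is arcwise connected, whence $X$ is hereditarily arcwise connected, equivalently hereditarily Wilder by~\cite{Wilder1}. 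This yields $\text{(1)}\Rightarrow\text{(2)}\Rightarrow\text{(3)}$.

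It remains to insert the continuum-wise property, for which I would prove $\text{dendroid}\Rightarrow\text{continuum-wise Wilder}$. Given mutually disjoint subcontinua $A,B,C$, form the bridge $[p,q]$ between $A$ and $B$ as above; then $A\cup[p,q]\cup B$ is a subcontinuum. If $C$ misses it, take $L=A\cup[p,q]\cup B$. Otherwise $C$, being disjoint from $A\cup B$, meets the open bridge in a point $s$, and $L=A\cup[p,s]\cup C$ is a subcontinuum containing $A$ and $C$ with $L\cap B=\emptyset$, because $[p,s]$ stops short of $q$, the only point of the bridge lying in $B$. In either case $L$ contains exactly one of $B,C$ and avoids the other. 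Together with the formal implications this closes the cycle. I expect this last implication to be the main obstacle: it is the only place where one must manipulate the explicit arc geometry, and it quietly relies on unique arcwise connectedness and on subcontinuum--arc intersections being subarcs; some care is also needed because the hypotheses are stated for possibly non-metric Hausdorff continua, so every geometric claim must be justified topologically rather than with a metric.
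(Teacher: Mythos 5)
Your proof is correct and follows precisely the route the paper indicates: the paper offers no explicit argument for this proposition, only the preceding remark that Wilder's theorem on irreducible Wilder continua is the key, and your write-up---the minimal continuum $[a,b]$ from hereditary unicoherence, the intersection trick showing subcontinua inherit the Wilder property, and the bridge construction for the continuum-wise case---is a sound and complete filling-in of exactly those details. No gaps.
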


Arguments similar to those for Wilder continua show that the class of nondegenerate arcwise connected  continua is properly contained in the class of continuum-wise Wilder continua.
This is not true for aposyndetic continua.

\begin{example}
It is known that the product of arbitrary nondegenerate continua is aposyndetic. Let $X$ be a continuum containing three points $a$, $b$ and $c$ such that $X$ is irreducible between $a$ and $b$ and between $a$ and $c$ and let $Y$ be a nondegenerate continuum. Then $X\times Y$ is aposyndetic but it is not continuum-wise Wilder, since, for subcontinua $A=\{a\}$, $B=\{b\} \times Y$ and $C=\{c\} \times Y$, if $L\subset X\times Y$ is a continuum which contains $A\cup B$ then the projection $L_X$ of $L$ into $X$ contains $\{a,b\}$, so $L_X=X$ and $L\cap C\neq\emptyset$.
\end{example}

\

Henceforth, we will consider only separable metric spaces. The hyperspace $2^X$ of all nonempty compact subsets of a space $X$ is equipped  with the Hausdorff metric and $C(X)$ denotes its subspace consisting of all subcontinua of $X$. Throughout the paper $I=[0,1]$ and cubes $I^n$ are endowed with the Euclidean metrics.

\

 While the theory of absorbing sets is well established (\cite{B} and ~\cite{M} are good references), only a few examples of  coanalytic absorbers in hyperspaces of cubes  have been recognized. They include:
 \begin{itemize}
 \item
 the Hurewicz set of all nonempty closed countable subsets of $I$  in $2^I$~\cite{C1},
 \item
 the set of all hereditarily decomposable subcontinua of $I^n$ in $C(I^n)$, $3\le n\le \infty$~\cite{Sam1},
\item
the set of all strongly countable-dimensional subcontinua of $I^\infty$ of
dimension $\ge 2$ in $C(I^\infty)$~\cite{KS},
\item
the sets of all weakly infinite-dimensional subcontinua of $I^\infty$ of dimension $\ge 2$ and of $C$-subcontinua of $I^\infty$ of dimension $\ge 2$ in $C(I^\infty)$~\cite{Krup}.
\end{itemize}

In this paper we consider the sets
$$\mathcal W\varsupsetneq \mathcal{CW}\varsupsetneq\mathcal{HA}$$  of Wilder continua, continuum-wise Wil\-der continua and nondegenerate hereditarily arcwise connected continua, resp., in  $I^n$ for $2\leq n\leq\infty$ as the subspaces of  $C(I^n)\overset{\text{top}}{=} I^\infty$.  All of them are  characterized in the next section as coanalytic absorbers in $C(I^n)$ if $n\ge 3$. It means, in particular, that they are homeomorphic to the Hurewicz set.

The characterization of $\mathcal{HA}$ seems to be of particular interest because  closely related families such as arcwise connected subcontinua, hereditarily locally connected subcontinua or dendroids so far escape known methods of recognizing  absorbers. The sets of hereditarily locally connected subcontinua and of  dendroids are coanalytic complete in $C(I^\infty)$ (see~\cite{DM},~\cite{CDM}), so they are natural candidates. The class $\mathcal{AC}$ of nondegenerate arcwise connected continua in $I^n$  is $\Pi^1_2$-complete for $n\ge 3$ (see~\cite[Theorem 37.11]{Ke}). It was claimed in~\cite{C2} that $\mathcal{AC}$ is a $\Pi^1_2$-absorber in $C(I^n)$ but, seemingly, the proof was never published. We observe that $\mathcal{AC}$ is strongly coanalytic-universal for $n\ge 2$ and is covered by a $\sigma Z$-set in $C(I^n)$ if $n\ge 3$.

\

For notions undefined in this paper   the reader is referred to the standard books~\cite{Ke},~\cite{M} and~\cite{Nad}.

\section{Main results}

Let $X$ be a topological Hilbert cube with a metric $d$. Recall that a closed subset $A\subset X$ is
called  a $Z$-{\it set} in $X$ if
 for any $\epsilon>0$ there exists a continuous mapping $f:X\to X$
such that  $f(X)\cap A= \emptyset$
and  $\widetilde{d}(f,\operatorname{id}_X):=\sup\{d(f(x),x):x\in X\}<\epsilon$. A countable union of $Z$-sets in $X$ is called a $\sigma Z$-{\it set} in $X$.

A subset $A$  of  $X$ is  \emph{strongly coanalytic-universal}
if for each coanalytic subset
$M$ of the Hilbert cube $I^\infty$  and
each  compact set $K\subset I^\infty$, any embedding $f:I^\infty \to X$ such that
$f(K)$ is a $Z$-set in $X$
can be approximated arbitrarily closely (in the sense of the uniform convergence)
 by an embedding $g:I^\infty\to X$ such that
   $g(I^\infty)$ is a $Z$-set in $X$,
$g|K=f|K$ and $g^{-1}(A)\setminus K=M\setminus K$.

Finally, $A$ is a
\emph{coanalytic absorber} in $X$ provided that
\begin{enumerate}
\item $A$ is a coanalytic set,
\item $A$ is contained in a $\sigma Z$-set in $X$,
\item $A$ is strongly coanalytic-universal.
\end{enumerate}

All coanalytic absorbers are mutually homeomorphic. Moreover,  if $A$ and $B$ are coanalytic absorbers in a Hilbert
cube $X$,  then there is a
homeomorphism $h: X \to X$ such that $h(A)=B$ which is
arbitrarily close (in the sense of metric $\widetilde{d}$) to the identity~\cite[Theorem 5.5.2]{M}.

\begin{proposition}\label{prop1}
If $Y$ is a compact space, then the families $\mathcal W$,  $\mathcal{CW}$ and $\mathcal{HA}$  in $Y$ are coanalytic subsets of the hyperspace $C(Y)$.
\end{proposition}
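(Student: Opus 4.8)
The plan is to write each of the three families in a prenex form consisting of a single block of quantifiers over a Polish space in front of a Borel matrix, exploiting that $\forall^{\mathcal P}(\text{Borel})$ is coanalytic for every Polish space $\mathcal P$ and that $\Pi^1_1$ is closed under such universal quantification. Since $Y$ is compact metric, $C(Y)$ (with the Hausdorff metric) is a compact metric space, and this compactness is exactly what I will use to tame the \emph{existential} ingredient in each definition: the subcontinuum $K$ (respectively $L$) whose existence is asserted. The idea is to first prove that the underlying ``joined by a subcontinuum that avoids a prescribed set'' relation is not merely analytic but Borel, indeed $\sigma$-compact; once that is in hand, the outer universal quantifiers over points (or over subcontinua) keep us inside $\Pi^1_1$.

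The key step is the following claim. In $C(Y)\times Y\times Y\times Y$ the set
\[
W=\{(X,x,y,z): \exists K\in C(Y),\ K\subset X,\ x\in K,\ y\in K,\ z\notin K\}
\]
is $F_\sigma$. Indeed, the inclusion relation $\{(A,B):A\subset B\}$ and the membership relation $\{(x,K):x\in K\}$ are closed in the respective (hyperspace) products, and $(z,K)\mapsto\operatorname{dist}(z,K)$ is continuous; hence for each $m$ the set
\[
S_m=\{(X,x,y,z,K): K\subset X,\ x,y\in K,\ \operatorname{dist}(z,K)\ge 1/m\}
\]
is a closed, therefore compact, subset of the compact space $C(Y)\times Y^3\times C(Y)$. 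Its projection onto the first four coordinates is compact, and, because $z\notin K$ is equivalent to $\operatorname{dist}(z,K)>0$, we get $W=\bigcup_m\pi(S_m)$, a countable union of compact sets. Permuting the roles of $y$ and $z$ and taking unions shows that the predicate $P(X,x,y,z)$, asserting the existence of $K\in C(X)$ containing $x$ and \emph{exactly one} of $y,z$, is Borel in $(X,x,y,z)$. The same argument, now with three subcontinua $A,B,C$ in place of the three points and with the condition $\operatorname{dist}(C,L)\ge 1/m$ on $C(Y)^2$, shows that the corresponding continuum-wise predicate $Q(X,A,B,C)$ is Borel in $(X,A,B,C)$.

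Granting this, the families assemble routinely. The set $\{X:\diam X>0\}$ of nondegenerate continua is open, and a nondegenerate continuum is automatically infinite, so the ``at least three points'' clause is harmless. Then
\[
\mathcal W=\{X:\diam X>0\}\cap\{X:\forall (x,y,z)\in Y^3,\ [\,x,y,z\in X\text{ distinct}\,]\Rightarrow P(X,x,y,z)\},
\]
whose second factor is $\forall^{Y^3}(\text{Borel})$, hence coanalytic, so $\mathcal W$ is coanalytic. Replacing points by subcontinua and quantifying over the Polish space $C(Y)^3$ (the antecedent ``$A,B,C$ are mutually disjoint subcontinua of $X$'' being Borel) presents $\mathcal{CW}$ as $\forall^{C(Y)^3}(\text{Borel})$, again coanalytic. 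Finally, invoking the cited equivalence that a nondegenerate continuum is hereditarily arcwise connected precisely when each of its nondegenerate subcontinua is Wilder, I write
\[
\mathcal{HA}=\{X:\diam X>0\}\cap\{X:\forall Z\in C(Y),\ [\,Z\subset X,\ \diam Z>0\,]\Rightarrow Z\in\mathcal W\}.
\]
Here the matrix is coanalytic (a Borel condition implying the $\Pi^1_1$ condition $Z\in\mathcal W$), and $\Pi^1_1$ is closed under the universal quantifier over the Polish space $C(Y)$, so $\mathcal{HA}$ is coanalytic as well.

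I expect the one genuine obstacle to be the claim of the second paragraph: a priori the existential quantifier over $K$ (or $L$) yields only that $P$ and $Q$ are analytic, which would push all three families to $\Pi^1_2$ and collapse the argument. What rescues it is compactness of $C(Y)$ together with the observation that, after exhausting the single open condition $z\notin K$ (resp.\ $C\cap L=\emptyset$) by the closed conditions $\operatorname{dist}\ge 1/m$, every remaining constraint is closed; this converts the projection into a countable union of compact sets rather than a mere analytic projection. Everything following that step is bookkeeping with the closure properties of $\Pi^1_1$.
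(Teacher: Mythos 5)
Your proof is correct and follows essentially the same route as the paper, whose entire argument is the assertion that a ``direct evaluation of the descriptive complexity'' of the displayed formula gives coanalyticity; you supply exactly the details that make this evaluation work, namely that the inner existential quantifier over $K$ (resp.\ $L$) yields an $F_\sigma$ rather than merely analytic predicate because the matrix is $\sigma$-compact in the compact hyperspace, after which the outer universal quantifiers stay within $\Pi^1_1$. Your reduction of $\mathcal{HA}$ to the hereditarily Wilder formulation is also the intended reading of the paper's ``similar evaluations work for two other families,'' since a direct formula quantifying existentially over arcs would only give $\Pi^1_2$.
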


\begin{proof}
A direct evaluation of the descriptive complexity of the formula:
\begin{multline*}
W\in \mathcal W \qquad \text{iff}\\
|W|>1 \quad \wedge \quad \forall \  x,y,z\in W \ \exists \ K\in C(Y) \\
 K\subset W \wedge x\in K \ \wedge \
\bigl((y\in K \ \wedge z\notin K)\ \vee \ (y\notin K\ \wedge\ z\in K)\bigr)
\end{multline*}
\newline
immediately gives  coanalyticity of $\mathcal W$. Similar evaluations work for two other families.
\end{proof}

\begin{proposition}\label{prop2}
The families $\mathcal W$, $\mathcal{CW}$, $\mathcal{AC}$ and $\mathcal{HA}$  in $I^n$ are contained in the family $\mathcal D(I^n)$ of all decomposable subcontinua of  $I^n$ which is a  $\sigma Z$-set in  $C(I^n)$ if $n\ge 3$.
\end{proposition}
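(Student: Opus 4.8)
The plan is to prove the statement in two parts. First, establish the inclusion $\mathcal{W}, \mathcal{CW}, \mathcal{AC}, \mathcal{HA} \subset \mathcal{D}(I^n)$, where $\mathcal{D}(I^n)$ is the family of decomposable subcontinua; second, show that $\mathcal{D}(I^n)$ is a $\sigma Z$-set in $C(I^n)$ for $n \ge 3$. Since the four Wilder-type families are already known to be nested via $\mathcal{W} \supsetneq \mathcal{CW} \supsetneq \mathcal{HA}$, and $\mathcal{AC} \subset \mathcal{CW}$ as remarked in the introduction, it would suffice for the inclusion part to check only that every continuum in the largest family $\mathcal{W}$ is decomposable.

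\medskip

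For the inclusion, I would argue by contraposition: a nondegenerate indecomposable continuum cannot have the Wilder property. The key fact is that in an indecomposable continuum every proper subcontinuum is nowhere dense, and any two distinct composants are disjoint. So I would pick a point $x$ together with two further points $y, z$ lying in composants distinct from that of $x$ (such a configuration exists because an indecomposable continuum has uncountably many composants). Then any subcontinuum $K \subset W$ containing $x$ is a proper subcontinuum contained in the composant of $x$, hence contains neither $y$ nor $z$, so $K$ can never contain exactly one of $y, z$. This violates Definition~\ref{D1}, showing $W \notin \mathcal{W}$. Thus every Wilder continuum, and a fortiori every member of the three subfamilies and of $\mathcal{AC}$, is decomposable.

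\medskip

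For the second and main part, I would recall that the set of decomposable continua is the complement, within $C(I^n)$, of the set of (degenerate or) indecomposable continua. The plan is to cover $\mathcal{D}(I^n)$ by countably many $Z$-sets indexed by a countable dense structure. Concretely, using that $C(I^n)$ is homeomorphic to the Hilbert cube for $n \ge 2$, I would exhibit, for each pair of basic closed sets witnessing decomposability, a closed family that is a $Z$-set, and show these families exhaust $\mathcal{D}(I^n)$. The $Z$-set property of each piece reduces to constructing, for every $\epsilon > 0$, a near-identity self-map of $C(I^n)$ pushing every continuum off the given piece; here the extra dimension $n \ge 3$ is exactly what provides room to perturb subcontinua into genuinely indecomposable ones (for instance by absorbing a standard indecomposable model) so that the image avoids the decomposable family locally.

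\medskip

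\textbf{The main obstacle} I expect is the $Z$-set construction: producing explicit near-identity maps of $C(I^n)$ whose images miss each piece of the cover requires the geometric room that only $n \ge 3$ supplies, and controlling the Hausdorff distance of the perturbation uniformly is delicate. It is plausible, however, that this second part is not reproved from scratch but quoted: the fact that $\mathcal{D}(I^n)$ is a $\sigma Z$-set in $C(I^n)$ for $n \ge 3$ is precisely the content needed in the Sam\-uel result on hereditarily decomposable continua cited earlier, so the cleanest route is to invoke that known $\sigma Z$-set structure directly and devote the argument here to the elementary inclusion established above.
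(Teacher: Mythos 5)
Your proposal is correct and follows essentially the same route as the paper: reduce to showing every Wilder continuum is decomposable (the paper treats this as clear; your composant argument is a correct justification) and quote Samulewicz's result that $\mathcal D(I^n)$ is a $\sigma Z$-set in $C(I^n)$ for $n\ge 3$, which is exactly what the paper does by citing \cite[Corollary 4.4]{Sam1}. The speculative sketch of a from-scratch $Z$-set construction is unnecessary once you invoke that citation, as you yourself conclude.
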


\begin{proof}
Clearly, all the families  are contained in $\mathcal W$ and each  Wilder continuum is decomposable. Moreover,   $\mathcal D(I^n)$ is a $\sigma Z$-set in $C(I^n)$ for $n\ge 3$ by~\cite[Corollary 4.4]{Sam1}.
\end{proof}

\begin{proposition}\label{prop3}
The families $\mathcal W$, $\mathcal{CW}$,  $\mathcal{AC}$ and  $\mathcal{HA}$ in $I^n$ are  strongly coanalytic-universal in $C(I^n)$ if $n\ge 2$.
\end{proposition}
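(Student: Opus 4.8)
The plan is to reduce all four assertions to a single geometric encoding, exploiting the inclusions $\mathcal{HA}\subset\mathcal{AC}\subset\mathcal{CW}\subset\mathcal{W}$. Fix a coanalytic $M\subset I^\infty$, a compact $K\subset I^\infty$, and an embedding $f\colon I^\infty\to C(I^n)$ with $f(K)$ a $Z$-set. I would construct an embedding $g$ uniformly close to $f$ with $g|K=f|K$ and $g(I^\infty)$ a $Z$-set, such that for $w\notin K$ the continuum $g(w)$ is hereditarily arcwise connected when $w\in M$ and fails the Wilder property when $w\notin M$. By the nesting, this single dichotomy yields $g^{-1}(\mathcal A)\setminus K=M\setminus K$ simultaneously for every $\mathcal A\in\{\mathcal W,\mathcal{CW},\mathcal{AC},\mathcal{HA}\}$, so the four families are treated at once.

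For the encoding I would invoke the canonical reduction of coanalytic sets to well-foundedness: choose a continuous map $w\mapsto T_w$ into the space of trees on $\mathbb N$ with $w\in M\iff T_w$ is well-founded. From $T_w$ I would build, continuously in $w$, a Hausdorff-small gadget $G(w)\subset I^n$ of the tree-indexed type used to establish coanalytic completeness of hereditarily arcwise connected continua and of dendroids \cite{DM},\cite{CDM}: a null sequence of arcs indexed by the nodes of $T_w$, with a topologist's-sine-curve structure inserted along each infinite branch. The design is to guarantee that $G(w)$ is hereditarily arcwise connected exactly when $T_w$ is well-founded, while an infinite branch produces an embedded topologist's-sine-curve whose limit bar carries a triple $x,y,z$ violating the Wilder property. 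Enough room for this is available already for $n\ge 2$, which is the only reason the hypothesis is $n\ge2$ rather than $n\ge3$.

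Since $f(w)$ is arbitrary and need not be arcwise connected, I would not attach the gadget to $f(w)$ itself. Instead, for $w\notin K$ I would replace $f(w)$ by a hereditarily arcwise connected continuum $P(w)$ (for instance a finite connected graph) lying Hausdorff-close to $f(w)$, and put $g(w)=P(w)\cup J(w)\cup G(w)$, where $J(w)$ is a short free arc joining $P(w)$ to $G(w)$ and $\diam(J(w)\cup G(w))\to 0$ as $w\to K$; on $K$ I set $g=f$. When $w\in M$ the three pieces are hereditarily arcwise connected and meet only at the endpoints of $J(w)$, so their union is hereditarily arcwise connected and $g(w)\in\mathcal{HA}$. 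The crux is the opposite case: when $w\notin M$ the failure of the Wilder property must be produced by $G(w)$ alone, independently of $P(w)$. For this I would arrange the limit bar of $G(w)$ to be a terminal subcontinuum of $g(w)$, so that any subcontinuum running from the bar out into $P(w)$ is forced through the oscillating curve and hence engulfs the whole bar; the triple $x,y,z$ then cannot be separated within $g(w)$, whence $g(w)\notin\mathcal W$. Establishing this terminality and the resulting separation obstruction is the main difficulty of the argument.

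It remains to secure the Hilbert-cube requirements by the standard absorber machinery. Keeping $P(w)$ uniformly within $\varepsilon$ of $f(w)$, together with $\diam(J(w)\cup G(w))\to 0$ as $w\to K$, makes $g$ lie within $\varepsilon$ of $f$, renders it continuous, and gives $g|K=f|K$; and a general-position and perturbation argument, using that $f$ is an embedding and $f(K)$ a $Z$-set, makes $w\mapsto P(w)$ and hence $g$ injective and renders $g(I^\infty)$ a $Z$-set in $C(I^n)$. These steps yield strong coanalytic-universality of $\mathcal W$, $\mathcal{CW}$, $\mathcal{AC}$ and $\mathcal{HA}$.
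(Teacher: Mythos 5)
Your overall architecture is the same as the paper's: exploit the chain $\mathcal{HA}\subset\mathcal{AC}\subset\mathcal{CW}\subset\mathcal W$ to reduce all four families to one dichotomy, replace $f(w)$ off $K$ by a nearby finite connected graph, attach a small scaled gadget that is hereditarily arcwise connected for $w\in M$ and destroys the Wilder property for $w\notin M$, and make the bad part of the gadget terminal so that a boundary-bumping argument forces any continuum reaching it from outside to swallow it. But the step on which everything rests fails as stated: there is no nonconstant continuous map from $I^\infty$ into the space of trees on $\mathbb N$, because that space is zero-dimensional (a closed subspace of $2^{\mathbb{N}^{<\mathbb{N}}}$, i.e.\ of a Cantor set) while $I^\infty$ is connected, so ``choose a continuous $w\mapsto T_w$ with $w\in M\iff T_w$ well-founded'' is possible only when $M\in\{\emptyset,I^\infty\}$. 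The tree normal form for coanalytic sets provides $T_w$ with much weaker regularity (node membership is determined by closed, not clopen, conditions on $w$), and upgrading it to a genuinely continuous hyperspace-valued map $w\mapsto G(w)$ realizing the dichotomy is precisely the hard content of the proposition. The paper does not build this from scratch: it invokes the map $\xi$ of \cite[Lemma 3.4]{Sam1} (itself resting on Cauty's construction for the Hurewicz set), which is continuous on $I^\infty$ and produces a dendroid for $q\in M$ and a continuum containing a pseudoarc for $q\notin M$; the continuity is achieved because the ``ill-founded'' pieces arise as Hausdorff limits of increasingly crooked arcs, and the obstruction is the hereditary indecomposability of the pseudoarc, which is what makes the ``every continuum through $a$ and $b$ contains $c$'' argument work. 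Your sine-curve obstruction is plausible in principle (the paper's own Example 1.3 is assembled from sine curves), but without the continuous parametrization the proof does not exist; that is the missing idea, not a routine detail.

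Two smaller points. A triple lying entirely on the limit bar of a topologist's sine curve does not violate the Wilder property: the bar is an arc, and a suitable subarc of it already contains $x$ and exactly one of $y,z$; the violating triple must have one point off the bar, so that every continuum containing it together with one bar point is forced through the oscillating tail and hence contains the whole bar. Also, injectivity of $g$ is not secured by your sketch: distinct $w,w'$ can produce the same graph $P(w)=P(w')$ and, since $T_w$ cannot vary continuously, you have no mechanism forcing $G(w)\neq G(w')$; the paper handles this with the auxiliary map $\theta$, attaching a null sequence of circles whose radii $r_i(q)=4^{-(i+1)}(1+q_i)$ encode all coordinates of $q$.
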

\begin{proof}
For simplicity, we assume that $n<\infty$ but one can easily adapt the proof to the case of the Hilbert cube $I^\infty$. Fix an arbitrary coanalytic set $M\subset I^\infty$, a closed $K\subset I^\infty$, an embedding  $f:I^\infty \to C(I^n)$ such that $f(K)$ is a $Z$-set and $\epsilon>0$. We are going to define a $Z$-embedding which agrees with $f$ on $K$, is $\epsilon$-close to $f$ and satisfies
\begin{multline}\label{eq:uni}
g^{-1}(\mathcal W )\setminus K= g^{-1}(\mathcal {CW} )\setminus K =  g^{-1}(\mathcal{AC})\setminus K =g^{-1}(\mathcal {HA} )\setminus K= \\  M\setminus K.
\end{multline}
We sketch main steps of a construction of $g$  following ideas from~\cite{GM} and~\cite{Sam1}.
First we associate with $M$  a continuous map $\xi: I^\infty \to C(I^n)$  as in~\cite[Lemma 3.4]{Sam1}.
It has the following property:
\begin{property}
For each $q\in I^\infty$,  $\xi(q)$ is a hereditarily unicoherent 1-dimensional continuum such that
$$\bigl((I\times \{0\})\cup (\{0\}\times I)\bigr)\times \{(\underbrace{0,\dots, 0}_{n-2})\}\subset \xi(q)\subset I^2\times \{(\underbrace{0,\dots, 0}_{n-2})\}$$ and  if $q\in M$, then $\xi(q)$ is a dendroid  which is a union of countably many arcs emanating from the segment $\{0\}\times I \times \{(\underbrace{0,\dots, 0}_{n-2})\}$ while if   $q\notin M$, then $\xi(q)$ contains a pseudoarc meeting  $\{0\}\times I \times \{(\underbrace{0,\dots, 0}_{n-2})\}$ at a single point.
\end{property}

Next, we modify map $\xi(q)$  by aggregating countably many copies of  $\xi(q)$ in  $I^2\times \{(\underbrace{0,\dots, 0}_{n-2})\}$:
\begin{equation}\label{def:Xi}
\Xi(q)= \bigcup_i\alpha_i(\xi(q))\cup (I\times\{1\}\times \{(\underbrace{0,\dots, 0}_{n-2})\}),
\end{equation}
where $$\alpha_i(x_1,x_2,0,\dots,0)=(x_1, \frac{1}{i(i+1)}x_2+\frac{i-1}{i},0,\dots,0), \quad i=1,2,\dots $$
(Figure~\ref{fig1}).

\begin{figure}[h]
\setlength{\unitlength}{1mm}
\begin{picture}(40,50)
\thicklines

\drawline(0,0)(40,0)
\drawline(0,0)(0,40)
\drawline(0,40)(40,40)
\drawline(0,20)(40,20)
\drawline(0,26.6)(40,26.6)
\drawline(0,30)(40,30)
\drawline(0,32)(40,32)
\put(-6,-3){\makebox(0,0){$(0,0,0,\dots)$}}
\put(46,43){\makebox(0,0){$(1,1,0,\dots)$}}
\put(17,9){\makebox(0,0){$\alpha_1(\xi(q))$}}

\put(17,23){\makebox(0,0){$\alpha_2(\xi(q))$}}
\put(20,36.5){\makebox(0,0){$\vdots$}}

\end{picture}
\caption{$\Xi(q)$}\label{fig1}
\end{figure}

Hence, for $q\in M$, $\Xi(q)\in  \mathcal {HA}  \subset \mathcal{AC}\subset \mathcal{CW}\subset \mathcal W$. For $q\notin M$, $\Xi(q)\notin \mathcal W$ because of the subsequent property.
\begin{property}\label{property1}
If $P\subset \Xi(q)$ is a pseudoarc and $a,b,c\in P$ are points such that $P$ is irreducible between any two of them, then for any continuum $L\subset \Xi(q)$ containing $a,b$, the continuum $L\cap P$ must contain $c$.
\end{property}
This shows that
\begin{equation}\label{eq1}
\Xi^{-1}(\mathcal W)=  \Xi^{-1}(\mathcal{CW})= \Xi^{-1}(\mathcal {AC})=\Xi^{-1}(\mathcal {HA})=M.
\end{equation}

Apart from  $\Xi$, we need another map $\theta:I^\infty \to C([-1,1]^n)$
sending $q=(q_i)$ to
$$\theta(q)=  \bigl(([-1,0]\times \{0\})\cup S((-\frac12,0);\frac12)\cup \bigcup_{i=1}^\infty S(a_i;r_i(q))\bigr)\times \{(\underbrace{0,\dots, 0}_{n-2})\},$$
where $S(x;r)$ denotes the circle in the plane centered at $x$ with radius $r$, $a_i=(-1+2^{-i},0)\in \mathbb R^2$ and $r_i(q)=4^{-(i+1)}(1+q_i).$

The figure $\theta(q)$ is the union of countably many disjoint circles and of the diameter segment of the largest circle lying in $[-1,0]\times [-1,1]\times \{(\underbrace{0,\dots, 0}_{n-2})\}$. The map $\theta$ is a continuous embedding.

We also use two deformations. The first one  $H_0:2^{I^n}\times I\to 2^{I^n}$ is such that, for any $(A,t)\in 2^{I^n}\times (0,\frac12]$,  $H_0(A,t)$ is finite, $dist(A,H_0(A,t))\le 2t$ ($dist$ is the Hausdorff distance) and $H_0(A,t)\subset [t,1-t]^n$. The second $H:C(I^n)\times [0,\frac12]\to C(I^n)$ is defined by
$$H(A,t) =  H_0(A,t)\cup \bigcup_{a,b\in H_0(A,t)} (\overline{ab}\cap (\overline{B}(a; 2t)\cup  \overline{B}(b; 2t)))$$
where $\overline{B}(a; \alpha)$ is the closed $\alpha$-ball in $I^n$ around $a$ and $\overline{ab}$ is the line segment in $I^n$ from $a$ to $b$.

It is known that for any  $(A,t)\in C(I^n)\times (0,\frac12)$,  $H(A,t)$ is a connected graph in $[t,1-t]^n$  and $dist(A,H(A,t))\le 4t$  (see~\cite{GM} and~\cite{Sam1} for details and other properties of $H$).

For each $q\in I^\infty$, put $$\mu(q) =\frac1{12}\min\{\epsilon, \min\{dist(f(q),f(q')): q'\in f(K)\}\}$$ and define our approximation
\begin{multline}\label{eq:g}
g(q)=  H(f(q),\mu(q))\  \cup \\
\bigcup_{x\in H_0(f(q),\mu(q))}(x+\mu(q)\theta(q))\  \cup \   \bigcup_{x\in H_0(f(q),\mu(q))}(x+\mu(q)\Xi(q))
\end{multline}
(we use linear operations of addition and scalar multiplication in~\eqref{eq:g}).
It is shown in the proof of~\cite[Lemma 3.2]{Sam1} that $g$ (denoted there by $G$; the definition of $G$ in~\cite{Sam1}  should be corrected to the effect as in~\eqref{eq:g})
 is a $Z$-embedding which is  $\epsilon$-close to $f$ and coincides with $f$ on $K$.

In order to see that~\eqref{eq:uni} is satisfied, suppose that $q\notin K$. Then $\mu(q)>0$. If $q\in M$, then one can easily see that $g(q)\in \mathcal {HA}$.

Assume now that $q\notin M$.  It is convenient to consider the vertex $$v=(v_1,v_2,v_3,\dots,v_n)\in H_0(f(q),\mu(q))$$ which is  maximal  in $H_0(f(q),\mu(q))$ with respect to the lexicographic order $\prec$ on $I^n$. Observe that the copy $v+\mu(q)\Xi(q)$ of $\Xi(q)$ can be intersected only by finitely many other copies  $v'+\mu(q)\Xi(q)$ for $v'=(v'_1,v'_2,v_3,\dots,v_n) \prec v$. Also, $v+\mu(q)\Xi(q)\setminus \{v\}$ is disjoint from $$H(f(q),\mu(q))\  \cup
\bigcup_{x\in H_0(f(q),\mu(q))}(x+\mu(q)\theta(q)).$$ It follows that there exists an open in $g(q)$ neighborhood $U$ of the point $v+\mu(q)(1,1,0,\dots,0)$ such that $$\overline{U}\subset v+\mu(q)\bigl(\Xi(q)\setminus \{0\}\times I\times \{(\underbrace{0,\dots, 0}_{n-2})\}\bigr).$$
By  definition~\eqref{def:Xi} of $\Xi$ and since $q\notin M$, some component of $\overline{U}$ is a pseudoarc $P$. Take an open subset $V$ of $U$ such that $V\cap P\neq \emptyset$ and $\overline{V}\subset U$. Let $P'$ be a component of $\overline{V}$ contained in $P$, $a,b,c\in P'$ be points from distinct composants of $P'$ and let
$L\subset g(q)$ be a continuum containing $a,b$. If $L\subset  \overline{U}$, then $ L\subset P$, hence $P'\subset L$  by the hereditary indecomposablity of $P$. In case $L\setminus \overline{U}\neq\emptyset$, the component $L_a$ of $L\cap \overline{U}$ containing point $a$ meets the boundary of $U$ by the Janiszewski Boundary Bumping Theorem~\cite[5.4]{Nad}, so $L_a\cap P'\neq\emptyset\neq L_a\setminus P'$ and, of course, $L_a\subset P$. Thus, $P'\subset L_a$ as $P$ is hereditarily indecomposable. In any case, we get $c\in L$.

Consequently, $g(q)\notin \mathcal {W}$.
\end{proof}

Our main theorem follows from Propositions~\ref{prop1},~\ref{prop2},~\ref{prop3}.

\begin{theorem}\label{th1}
The families  $\mathcal{HA}$ and $\mathcal{CW}$, $\mathcal W$ in $I^n$ are coanalytic absorbers in the hyperspace $C(I^n)$ if $n\ge 3$.
\end{theorem}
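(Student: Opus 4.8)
The plan is to read off the conclusion directly from the three preceding propositions, since the definition of a coanalytic absorber has exactly three clauses and each matches one proposition. The only preliminary point is to record that the ambient space is a topological Hilbert cube: for every $n\ge 2$ the hyperspace $C(I^n)$ is homeomorphic to $I^\infty$ (as already noted, $C(I^n)\overset{\text{top}}{=}I^\infty$), so for $n\ge 3$ it plays the role of the Hilbert cube $X$ in the definition, and the homogeneity statement of~\cite[Theorem 5.5.2]{M} becomes applicable.

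Fixing any one of the families $\mathcal A\in\{\mathcal W,\mathcal{CW},\mathcal{HA}\}$, I would then verify the three defining conditions in turn. Coanalyticity of $\mathcal A$ (condition (1)) is Proposition~\ref{prop1} with $Y=I^n$. The containment of $\mathcal A$ in a $\sigma Z$-set (condition (2)) is Proposition~\ref{prop2}, namely $\mathcal A\subset\mathcal D(I^n)$ together with the fact that $\mathcal D(I^n)$ is a $\sigma Z$-set in $C(I^n)$; this is the clause forcing $n\ge 3$. Strong coanalytic-universality of $\mathcal A$ (condition (3)) is Proposition~\ref{prop3}, which holds already for $n\ge 2$. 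Hence each of $\mathcal W$, $\mathcal{CW}$ and $\mathcal{HA}$ satisfies all three conditions and is a coanalytic absorber in $C(I^n)$ for $n\ge 3$.

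For the final assertion I would appeal to the uniqueness of coanalytic absorbers: by~\cite[Theorem 5.5.2]{M} any two coanalytic absorbers in a Hilbert cube are carried onto one another by a homeomorphism of the cube arbitrarily close to the identity. The Hurewicz set of nonempty countable closed subsets of $I$ is itself a coanalytic absorber (in the Hilbert cube $2^I$) by~\cite{C1}, so each of the three families is homeomorphic to it. The argument presents no genuine obstacle, as all substantive content lives in Propositions~\ref{prop1}--\ref{prop3}; the only care needed is the bookkeeping of index ranges (the binding constraint $n\ge 3$ enters solely through the $\sigma Z$-set clause) and the confirmation that $C(I^n)$ is an honest Hilbert cube so that the absorber theory applies.
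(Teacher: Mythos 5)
Your proposal is correct and matches the paper exactly: the authors likewise deduce Theorem~\ref{th1} immediately from Propositions~\ref{prop1}, \ref{prop2} and~\ref{prop3}, which supply the three clauses of the definition of a coanalytic absorber, with the constraint $n\ge 3$ entering only through the $\sigma Z$-set condition. Your additional appeal to the uniqueness theorem~\cite[Theorem 5.5.2]{M} and to the Hurewicz set is consistent with the remarks the paper makes in the introduction and just before Proposition~\ref{prop1}.
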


We do not know if Theorem~\ref{th1} remains true for $n=2$.

\bibliographystyle{amsplain}

\end{document}